\documentclass[12pt]{amsart}
\usepackage[T1]{fontenc}
\usepackage{lmodern}
\usepackage{amsmath,amssymb,amsthm}
\usepackage[a4paper,margin=30mm]{geometry}
\usepackage{microtype}
\usepackage[hidelinks]{hyperref}

\newtheorem{theorem}{Theorem}[section]
\newtheorem{proposition}[theorem]{Proposition}
\newtheorem{lemma}[theorem]{Lemma}
\newtheorem{corollary}[theorem]{Corollary}
\newtheorem{definition}[theorem]{Definition}

\newcommand{\C}{\mathbb C}

\title{Strongly linearly convex exhaustion of $\mathbb C$-convex domains with $C^{1,\alpha}$ boundary}
\author{Armen Edigarian}
\thanks{Funded by the National Science Centre, Poland under the Weave UNISONO, UMO-2025/07/Y/ST1/00146}

\begin{document}
\maketitle

\begin{abstract}
Let $D\subset\C^n$ be a bounded $\mathbb C$-convex domain with boundary of class $C^{1,\alpha}$, where $\alpha>1/2$. We prove that $D$ admits an increasing exhaustion by bounded $C^\infty$ strongly linearly convex domains. This gives, in the class $C^{1,\alpha}$, a positive answer to the approximation problem originally posed by Aizenberg \cite{Aiz94}.
\end{abstract}

\medskip
\noindent\textit{2020 Mathematics Subject Classification.} Primary 32F17; Secondary 32F45.

\noindent\textit{Keywords.} $\mathbb C$-convex domain, strong linear convexity, Aizenberg approximation problem, boundary distance, Lempert theorem.

\section{Introduction}

The approximation problem considered here goes back to Aizenberg \cite{Aiz94}:
\medskip
\noindent\emph{Does every bounded $\C$-convex domain admit an increasing exhaustion by smooth strongly linearly convex domains?}
\medskip

The first general positive result with strict smooth approximants is due to D. Jacquet. He proved that a bounded $\C$-convex domain with $C^2$ boundary admits an increasing exhaustion by $C^\infty$ strictly $\C$-convex domains \cite{Jac06}. D. Jacquet subsequently developed a $C^1$ theory in his thesis. Under a uniform interior ball condition he characterized $\C$-convexity near the boundary in terms of $-\log\delta_D^2$ and obtained an exhaustion by strictly $\C$-convex domains with $C^1$ boundary \cite[Theorems~3.2.2 and~3.2.4]{Jac08}. Thus his $C^2$ theorem gives smooth strongly linearly convex approximants, whereas his $C^1$ theorem requires an additional geometric hypothesis and does not smooth the approximating boundaries.

Pflug and Zwonek isolated the strong form of the approximation problem and asked whether every bounded $\C$-convex domain can be exhausted by strongly linearly convex domains \cite{PZ12}. They answered this affirmatively for the symmetrized bidisc by an explicit construction. Their result is especially relevant from the viewpoint of invariant distances, because the symmetrized bidisc is nonsmooth and nevertheless the strongly linearly convex exhaustion gives another route to the Lempert equality.

The purpose of the present paper is to lower Jacquet's $C^2$ regularity assumption while keeping smooth strongly linearly convex approximants and without imposing an interior ball condition.

\begin{theorem}\label{thm:main}
Let $D\subset\C^n$, $n\ge2$, be a bounded $\C$-convex domain with boundary of class $C^{1,\alpha}$, where $1/2<\alpha\le1$. Then there are bounded domains $D_j\subset\C^n$ such that
\[
 \overline{D_j}\subset D_{j+1},\qquad \bigcup_{j=1}^\infty D_j=D,
\]
and every $D_j$ has $C^\infty$ boundary and is strongly linearly convex.
\end{theorem}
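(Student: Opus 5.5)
The plan is to mimic Jacquet's $C^2$ strategy with the boundary distance $\delta=\delta_D$, replacing second derivatives of the defining function by a quantitative estimate that $C^{1,\alpha}$ regularity with $\alpha>1/2$ supplies. Recall the known characterization: a domain with $C^1$ boundary is $\C$-convex if and only if, for every $z\in D$, the complex affine hyperplane through $z$ orthogonal to $\nabla\delta$ at a nearest boundary point misses $D$. Equivalently, at each $z$ near $\partial D$ the function $-\log\delta$ restricted to the complex tangent direction $z+\C v$, with $v\perp_{\C}\nu(\pi(z))$ and $\pi$ the nearest point projection, has $\delta(z+\lambda v)\le \delta(z)+o(|\lambda|)$-type control. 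First I would establish the key local estimate: there is $C$ such that for $z$ close to $\partial D$, all $v$ in the complex tangent space at $\pi(z)$, $|v|=1$, and small $\lambda\in\C$,
\[
 \delta(z+\lambda v)\le \delta(z)+C|\lambda|^{1+\alpha}.
\]
This is obtained by writing $\partial D$ near $\pi(z)$ as a $C^{1,\alpha}$ graph over its real tangent hyperplane and using that the complex tangent hyperplane through $\pi(z)$ lies outside $D$. Here the condition $\alpha>1/2$ enters: the error $|\lambda|^{1+\alpha}$, after composing with $\lambda\mapsto$ quadratic corrections, must be $o(|\lambda|^2)/\delta$ in the regime $|\lambda|\sim\delta^{1/2}$, which is precisely where the standard convexity test for $-\log\delta$ is performed. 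Indeed $|\lambda|^{1+\alpha}\ll\delta$ for $|\lambda|\le\delta^{1/2}$ exactly when $(1+\alpha)/2>1/2\cdot 2\cdot\ldots$, i.e. it gives $\delta^{(1+\alpha)/2}=o(\delta^{3/4})$ and the tangential gain is $\delta^{1/2}\cdot\delta^{1/2}$.

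Second, I would build the approximants as sublevel sets. Define $\varphi=-\log\delta+\varepsilon|z|^2$, which by the first step is (in a mean-value sense over complex tangent discs of radius $\sim\delta^{1/2}$) strictly "linearly plurisubharmonic" near $\partial D$. Next I would regularize it by convolution with a mollifier at a scale $r(z)\ll\delta(z)$ varying with $z$ (e.g. Richberg-type gluing, or convolution at scale $\eta\delta$). Since $\delta$ is $1$-Lipschitz and $\nabla\delta$ is $C^{0,\alpha}$ up to scale $\delta$ near a $C^{1,\alpha}$ boundary, the smoothing $\tilde\varphi$ has gradient close to $\nabla\varphi$ and inherits the uniform complex-tangential Hessian bound, with the error terms $\delta^{\alpha-1}$-sized controlled using $\alpha>1/2$. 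Then $D_j=\{\tilde\varphi<t_j\}$ for regular values $t_j\uparrow\infty$ (Sard) gives smooth bounded domains, nested with $\overline{D_j}\subset D_{j+1}$, exhausting $D$.

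Third, I would verify strong linear convexity of $D_j$: the defining function $\tilde\varphi-t_j$ must have positive definite complex Hessian restricted to the complex tangent space in the sense $\sum\partial_j\bar\partial_k\rho\,v_j\bar v_k>|\sum\partial_j\partial_k\rho\,v_jv_k|$. So I need the real Hessian of $\tilde\varphi$ on each complex tangent line to be positive definite, not merely its Laplacian. This is the main obstacle: the first step only gives a one-sided estimate on complex lines orthogonal to $\nabla\delta$ at the foot point, whereas the level sets of $\tilde\varphi$ have slightly tilted normals. I would handle it by proving the first-step inequality uniformly for lines within angle $\delta^{\beta}$ of the complex tangent ($\beta<\alpha$), showing tilt of $\nabla\tilde\varphi$ relative to $\nu(\pi(z))$ is $O(\delta^{\alpha})$ (Hölder continuity of $\nu$), and absorbing the resulting errors, which scale like $\delta^{2\alpha-1}\to0$ relative to the $\varepsilon$-term only because $\alpha>1/2$; the strict term $\varepsilon|z|^2$ then yields strict positivity of the full real Hessian of $\tilde\varphi$ on complex tangent lines, giving strong linear convexity.
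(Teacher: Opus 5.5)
There is a genuine gap, and it sits exactly where the theorem is hard. Your Steps 2--3 assert that the mollified function ``inherits the uniform complex-tangential Hessian bound'', but this is never proved, and the scalings you propose for it do not close.

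Do the tilt computation explicitly. Let $p$ be a nearest point of $z$ and $s=\langle v,\nu(p)\rangle\neq0$. The supporting hyperplane only gives $\delta(z+\lambda v)\le|\delta(z)-\lambda s|$. So the inequality $\delta(z+\lambda v)\le\delta(z)+C|\lambda|^{1+\alpha}$ is false on tilted lines, because there is a first-order term $|\lambda||s|$. The supporting function $-\log|\delta(z)-\lambda s|$ is harmonic in $\lambda$. Its second derivative along $\lambda=te^{i\theta}$ at $t=0$ is $\bigl(2(\operatorname{Re}(e^{i\theta}s))^2-|s|^2\bigr)/\delta(z)^2$, which equals $-|s|^2/\delta(z)^2$ for suitable $\theta$. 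Thus, in the $-\log\delta$ normalization, a tilt $s$ costs $|s|^2/\delta^2$, and this must be beaten by your fixed $\varepsilon$.

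With your tilt $s\sim\delta^\alpha$, the cost is $\delta^{2\alpha-2}\to\infty$ for every $\alpha<1$. That tilt is what the H\"older modulus of $\nu$ gives when foot points move by $O(\delta)$, e.g.\ under convolution at scale $\eta\delta$. The quantity $\delta^{2\alpha-1}$ you quote is the cost for the Hessian of $\delta$ itself. It has to be compared with $\varepsilon\delta$, not with $\varepsilon$.

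Step 1 is also off. The complex tangent hyperplane at $p$, not at $z$, misses $D$, so one has exactly $\delta(z+\lambda v)\le\delta(z)$ for $v$ complex tangent at $p$. The heuristic for $\alpha>1/2$ given there does not make sense: $|\lambda|^{1+\alpha}\ll\delta$ on $|\lambda|\le\delta^{1/2}$ would require $\alpha>1$.

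For the argument to close, two things are needed. First, the oscillation of the foot-point normals over the mollifying ball must be $o(\delta)$. Second, you need a mechanism that carries a one-sided, gradient-dependent inequality through convolution. The paper supplies both:
\begin{itemize}
\item Proposition~\ref{prop:second-difference} gives, for $h=\delta_D^2$, a second-difference bound in \emph{all} directions $w$, with the tilt built into the right-hand side $2t^2|\partial h(z)(w)|^2/h(z)$. So no cone of directions is needed.
\item The identity \eqref{eq:variance-identity} shows that after convolution the defect is exactly the $h$-weighted variance of $\partial h(\zeta)(w)/h(\zeta)$ over the ball.
\item Lemma~\ref{lem:gradosc}, a bootstrap on $u(z)=z-p(z)$, proves $|\partial h(z)(w)-\partial h(\zeta)(w)|\le C(|z-\zeta|+\delta^\gamma)|w|$ with $\gamma=1/(1-\alpha)$.
\end{itemize}
This last estimate is the key idea missing from your proposal. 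It does not follow by composing the H\"older modulus of $\nu$ with the projection. At distance $\delta$ from a $C^{1,\alpha}$ boundary the metric projection is neither single-valued nor Lipschitz, so a priori you have no control of $|p(z)-p(\zeta)|$, and your claim that $\nabla\delta$ is ``$C^{0,\alpha}$ up to scale $\delta$'' is not available.

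Mollifying at a scale far below $\delta$, namely $\delta^\gamma$, gives normal oscillation $O(\delta^{\gamma-1})$. The resulting defect is $O(\delta^{2\gamma-2})$. It is beaten by the gain $c\delta^2$ from subtracting $c\delta^2(1+|z|^2)$ (Lemma~\ref{lem:strictification}), and $2\gamma-2>2$ is precisely the condition $\alpha>1/2$. At your scale $\eta\delta$, none of the available estimates can absorb the error.
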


The theorem has an immediate consequence for invariant distances and metrics \cite{JP,KW,Lem81,Lem84}. 

\begin{corollary}\label{cor:lempert}
Under the assumptions of Theorem~\ref{thm:main}, the Carath\'eodory distance equals the Lempert function on $D$, and the Carath\'eodory--Reiffen metric equals the Kobayashi--Royden metric.
\end{corollary}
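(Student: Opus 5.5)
The corollary follows from Theorem~\ref{thm:main} together with Lempert's theorem and a limiting argument along the exhaustion. By Theorem~\ref{thm:main} we fix bounded domains $D_j$ with $C^\infty$ boundary, strongly linearly convex, with $\overline{D_j}\subset D_{j+1}$ and $\bigcup_j D_j=D$. Lempert's theorem \cite{Lem81,Lem84} applies to bounded strongly linearly convex domains with smooth boundary. On each $D_j$ it gives
\[
c_{D_j}=\ell_{D_j}, \qquad \gamma_{D_j}=\kappa_{D_j}.
\]
Here $c$, $\ell$, $\gamma$, $\kappa$ denote the Carath\'eodory distance, the Lempert function, the Carath\'eodory--Reiffen metric and the Kobayashi--Royden metric. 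If only the version for convex domains is wanted, one can quote the smooth strongly linearly convex case as in \cite{JP}, Chapter~11.

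Next we pass to the limit $j\to\infty$, using the standard continuity properties under increasing exhaustions \cite{JP}. The Lempert function and the Kobayashi--Royden metric are continuous from below: for an increasing union, $\ell_{D_j}(z,w)\searrow \ell_D(z,w)$ and $\kappa_{D_j}(z;X)\searrow\kappa_D(z;X)$. The reason is that an analytic disc $\varphi:\mathbb D\to D$ with $\varphi(0)=z$ and $\varphi(t)=w$ restricts, after shrinking to $\varphi(r\,\cdot)$ with $r<1$, to a disc with relatively compact image. That image lies in some $D_j$, and letting $r\to1$ recovers the infimum.

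For the Carath\'eodory side, holomorphic functions $D_j\to\mathbb D$ pass to a limit by Montel's theorem and a diagonal argument. This yields $\limsup_j c^*_{D_j}(z,w)\le c^*_D(z,w)$, where $c^*=\tanh c$ is the M\"obius pseudodistance. The reverse inequality $c^*_D\le c^*_{D_j}$ is immediate from restriction. Hence $c^*_{D_j}\to c^*_D$, and similarly $\gamma_{D_j}\to\gamma_D$.

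Combining these limits with $c_{D_j}=\ell_{D_j}$ gives $c_D=\ell_D$ on $D\times D$, and the same argument gives $\gamma_D=\kappa_D$. The one point needing care is that the Lempert function and the Carath\'eodory distance are compared in the same normalization, either both as $\tanh^{-1}$ of M\"obius-type quantities or both as $p$-values. With that settled the argument is routine, and I expect no real obstacle: all the work lies in Theorem~\ref{thm:main}.
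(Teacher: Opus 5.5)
Your argument is correct and is exactly what the paper means by calling the corollary an immediate consequence. You apply Lempert's theorem for smooth bounded strongly linearly convex domains on each $D_j$; note that this needs the 1984 version \cite{Lem84} (see also \cite{KW}), not the convex version of \cite{Lem81}, since the $D_j$ need not be convex. You then pass to the limit using the monotone convergence of $c_{D_j}$, $\ell_{D_j}$, $\gamma_{D_j}$ and $\kappa_{D_j}$ along the increasing exhaustion \cite{JP}.

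One detail is worth stating explicitly. In the Montel step, normalize the near-extremal functions by $f_j(z)=0$, composing with a M\"obius automorphism of the disc, so that the limit maps $D$ into the open disc rather than being a unimodular constant.
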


We use the Hermitian product $\langle z,w\rangle=\sum_j z_j\overline{w_j}$, linear in the first variable. For a real $C^2$ function $u$ and $w\in\C^n$ set
\[
 \partial u(w)=\sum_{j=1}^n \frac{\partial u}{\partial z_j}w_j,
 \qquad
 L_u(w)=\sum_{j,k=1}^n \frac{\partial^2 u}{\partial z_j\partial\bar z_k}w_j\overline{w_k},
 \qquad
 Q_u(w)=\sum_{j,k=1}^n \frac{\partial^2u}{\partial{z_j}\partial z_k}w_jw_k.
\]
If $\zeta=t e^{i\theta}$, then differentiation of the one-variable function $\zeta\mapsto u(z+\zeta w)$ gives
\begin{equation}\label{eq:complex-line-second}
 \frac{d^2}{dt^2}u(z+t e^{i\theta}w)\bigg|_{t=0}
 =2L_u(z;w)+2\operatorname{Re}\bigl(e^{2i\theta}Q_u(z;w)\bigr).
\end{equation}
\begin{definition}
A domain $D\subset\C^n$ is $\C$-convex if its intersection with every affine complex line is either empty or connected and simply connected.
\end{definition}

A bounded domain $D\subset\C^n$ with $C^2$ boundary is \emph{strongly linearly convex} if it has a $C^2$ defining function $r$ such that, for every $p\in\partial D$ and every nonzero complex tangent vector $w$, that is, every $w\ne0$ satisfying $\partial r(p)(w)=0$,
\begin{equation}\label{eq:slc}
 L_r(p;w)>|Q_r(p;w)|.
\end{equation}
The condition is independent of the choice of defining function.

\section{Proof of Theorem~\ref{thm:main}}

The proof retains Jacquet's two central ideas: work with $h=\operatorname{dist}(\cdot,\partial D)^2$ and then replace the regularized $h$ by $h-\varepsilon(1+|z|^2)$. 

If $\alpha=1$, we may replace $\alpha$ by any number in $(1/2,1)$. Thus we assume $1/2<\alpha<1$ and put
\[
 \delta_D(z)=\operatorname{dist}(z,\partial D),\qquad
 h(z)=\delta_D(z)^2,\qquad
 \gamma=\frac1{1-\alpha}>2.
\]
Fix $z_*\in D$ and choose $R>0$ so that $\overline D\subset B(0,R)$. Let
$\chi\in C_c^\infty(\C^n)$ be nonnegative, supported in the Euclidean unit
ball, and normalized by $\int\chi=1$. For $\tau>0$ put
$\chi_\tau(z)=\tau^{-2n}\chi(z/\tau)$.

Fix $0<c<\frac1{20(1+R^2)}$.
For small $\delta>0$ set
$\tau_\delta=\delta^\gamma$, $h_\delta=h*\chi_{\tau_\delta}$
on $\{\delta_D>\tau_\delta\}$, and define
\[
 \widetilde h_\delta(z)=h_\delta(z)-c\delta^2(1+|z|^2).
\]
Finally put
\begin{equation}\label{eq:Vdelta}
 V_\delta=
 \left\{z\in D:\delta_D(z)>\tau_\delta,\ 
 \widetilde h_\delta(z)>\frac92\delta^2\right\}.
\end{equation}
Let $G_\delta$ be the connected component of $V_\delta$ containing $z_*$. Let us
prove that, for all sufficiently small $\delta$, $G_\delta$ has $C^\infty$ strongly
linearly convex boundary. Take
$\delta_j=2^{-j}\delta_0$, $D_j=G_{\delta_j}$,
with $\delta_0>0$ sufficiently small.

For $p\in\partial D$, let $\nu(p)$ be the outward Euclidean unit normal.
Since $D$ is $\C$-convex and $\partial D$ is $C^1$, the
complex tangent hyperplane
\[
 \{z\in\C^n:\langle z-p,\nu(p)\rangle=0\}
\]
is disjoint from $D$; see \cite{APS,NT}. Hence
\begin{equation}\label{eq:envelope}
 \delta_D(z)=
 \inf_{p\in\partial D}|\langle z-p,\nu(p)\rangle|,
 \qquad z\in D.
\end{equation}

The following result is closely related to Jacquet's $C^1$ supporting inequality; compare \cite[Lemma~3.1.4]{Jac08}.

\begin{proposition}\label{prop:second-difference}
Let $z\in D$ be a differentiability point of $h$. Then, for any
$\theta\in\mathbb R$, any $w\in\C^n$, and any real $t$ sufficiently small
that $z\pm te^{i\theta}w\in D$,
\begin{equation}\label{eq:second-difference}
 h(z+t e^{i\theta}w)+h(z-t e^{i\theta}w)-2h(z)
 \le 2t^2\frac{|\partial h(z)(w)|^2}{h(z)}.
\end{equation}
\end{proposition}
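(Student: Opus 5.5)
The plan is to use the envelope formula \eqref{eq:envelope} to dominate $h$ by the squared modulus of a single affine holomorphic function that touches $h$ at $z$. This reduces \eqref{eq:second-difference} to an exact computation for that function.

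First, choose a nearest boundary point. Since $\partial D$ is compact, there is $p\in\partial D$ with $|z-p|=\delta_D(z)>0$. Because $\partial D$ is $C^1$ and the ball $B(z,\delta_D(z))$ lies in $D$ and touches $\partial D$ at $p$, the vector $z-p$ is a negative multiple of $\nu(p)$, namely $z-p=-\delta_D(z)\nu(p)$. Put
\[
 \ell(x)=\langle x-p,\nu(p)\rangle ,
\]
which is an affine complex-linear, hence holomorphic, function. Then $|\ell(z)|=|z-p|=\delta_D(z)$, so $h(z)=|\ell(z)|^2$. By \eqref{eq:envelope}, $h(x)\le|\ell(x)|^2$ for every $x\in D$. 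So $g=|\ell|^2-h$ is nonnegative on $D$ and vanishes at $z$.

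Second, compare derivatives at $z$. By hypothesis $h$ is differentiable at $z$, and $|\ell|^2$ is smooth, so $g$ is differentiable at $z$. Since $g$ attains its minimum $0$ at $z$, its real differential vanishes there, and hence $\partial h(z)=\partial|\ell|^2(z)$. Set $a=\langle w,\nu(p)\rangle$. Using $\ell(x+\zeta w)=\ell(x)+\zeta a$ and the holomorphy of $\ell$, we get
\[
 \partial h(z)(w)=a\,\overline{\ell(z)} .
\]
Therefore
\[
 |\partial h(z)(w)|^2=|a|^2 h(z),
 \qquad\text{that is,}\qquad
 |a|^2=\frac{|\partial h(z)(w)|^2}{h(z)},
\]
where we use $h(z)>0$.

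Third, apply the domination at $z\pm te^{i\theta}w$. These points lie in $D$ by assumption, so the envelope bound applies. With $\zeta=te^{i\theta}$,
\[
 h(z+\zeta w)+h(z-\zeta w)\le |\ell(z)+\zeta a|^2+|\ell(z)-\zeta a|^2=2|\ell(z)|^2+2t^2|a|^2 ,
\]
by the parallelogram identity. Subtracting $2h(z)=2|\ell(z)|^2$ and inserting the formula for $|a|^2$ gives \eqref{eq:second-difference}.

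The only delicate point is the first step: the supporting function $|\ell|^2$ must touch $h$ exactly at $z$. This is why we pick a nearest boundary point rather than an arbitrary almost-minimizer of \eqref{eq:envelope}. Once we have the inner-ball tangency $\nu(p)\parallel z-p$ from the $C^1$ boundary, the rest is routine.
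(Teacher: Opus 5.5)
Your proof is correct and follows the paper's argument essentially verbatim: you dominate $h$ by $|\langle\cdot-p,\nu(p)\rangle|^2$ with equality at $z$, read off $\partial h(z)(w)=\overline{\langle z-p,\nu(p)\rangle}\,\langle w,\nu(p)\rangle$ from the touching at a differentiability point, and finish with the parallelogram identity. Your nearest-point justification of why the infimum in \eqref{eq:envelope} is attained at $z$ is a detail the paper leaves implicit; it even follows without the tangency argument, since Cauchy--Schwarz and \eqref{eq:envelope} give $\delta_D(z)\le|\langle z-p,\nu(p)\rangle|\le|z-p|=\delta_D(z)$ for a nearest point $p$.
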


\begin{proof}
Choose $p\in\partial D$ at which the infimum in
\eqref{eq:envelope} is attained at $z$. Then
\[
 h(\zeta)\le |\langle \zeta-p,\nu(p)\rangle|^2,
 \qquad \zeta\in D,
\]
with equality at $\zeta=z$. Since $h$ is differentiable at $z$, we obtain
\[
 \partial h(z)(w)
 =\overline{\langle z-p,\nu(p)\rangle}\,
 \langle w,\nu(p)\rangle.
\]
Hence
\[
 \frac{|\partial h(z)(w)|^2}{h(z)}
 =|\langle w,\nu(p)\rangle|^2.
\]
Using the preceding majorant at $z\pm te^{i\theta}w$ gives
\[
\begin{aligned}
 &h(z+t e^{i\theta}w)+h(z-t e^{i\theta}w)-2h(z)\\
 &\quad\le
 |\langle z-p+t e^{i\theta}w,\nu(p)\rangle|^2
 +|\langle z-p-t e^{i\theta}w,\nu(p)\rangle|^2\\
 &\qquad -2|\langle z-p,\nu(p)\rangle|^2\\
 &\quad=2t^2|\langle w,\nu(p)\rangle|^2,
\end{aligned}
\]
which proves \eqref{eq:second-difference}.
\end{proof}

Since $\partial D$ is compact and of class $C^{1,\alpha}$, there is
$M>0$ such that
\[
 |\nu(p)-\nu(q)|\le M|p-q|^\alpha,
 \qquad p,q\in\partial D.
\]
At a differentiability point $z$ of $h$, if $p(z)$ is a nearest boundary
point, then
\[
 z=p(z)-\delta_D(z)\nu(p(z))
\]
and
\begin{equation}\label{eq:normalrepresentation}
 \partial h(z)(w)
 =\langle w,z-p(z)\rangle
 =-\delta_D(z)\langle w,\nu(p(z))\rangle.
\end{equation}

\begin{lemma}\label{lem:gradosc}
There is a constant $C>0$ such that, for all sufficiently small $\delta>0$,
whenever $z,\zeta$ are differentiability points of $h=\delta_D^2$ satisfying
\[
 \frac\delta2\le\delta_D(z),\delta_D(\zeta)\le5\delta,
 \qquad |z-\zeta|\le\frac\delta4,
\]
then, for every $w\in\C^n$,
\begin{equation}\label{eq:gradestimate}
 |\partial h(z)(w)-\partial h(\zeta)(w)|
 \le C\bigl(|z-\zeta|+\delta^\gamma\bigr)|w|.
\end{equation}
\end{lemma}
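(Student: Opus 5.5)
The plan is to work directly from the normal representation \eqref{eq:normalrepresentation} and reduce everything to a Hölder estimate on the nearest-point map. Put $d=|z-\zeta|$, $p=p(z)$, $q=p(\zeta)$; at differentiability points these nearest boundary points are well defined by the discussion preceding \eqref{eq:normalrepresentation}. Write
\[
 \partial h(z)(w)-\partial h(\zeta)(w)
 =-\bigl(\delta_D(z)-\delta_D(\zeta)\bigr)\langle w,\nu(p)\rangle
 -\delta_D(\zeta)\langle w,\nu(p)-\nu(q)\rangle .
\]
Since $\delta_D$ is $1$-Lipschitz, the first term is at most $d|w|$. The second term is at most $5\delta M|p-q|^\alpha|w|$. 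Hence the whole task is to show that $\delta|p-q|^\alpha\le C(d+\delta^\gamma)$.

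To bound $x=|p-q|$ I will use the relations $p=z+\delta_D(z)\nu(p)$ and $q=\zeta+\delta_D(\zeta)\nu(q)$. Subtracting gives
\[
 p-q=(z-\zeta)+\bigl(\delta_D(z)-\delta_D(\zeta)\bigr)\nu(p)+\delta_D(\zeta)\bigl(\nu(p)-\nu(q)\bigr),
\]
and therefore
\[
 x\le 2d+5M\delta x^\alpha .
\]
This self-improving inequality splits into two cases. Either $x\le4d$, or $2d<5M\delta x^\alpha$, in which case $x\le10M\delta x^\alpha$, that is, $x\le(10M\delta)^{\gamma}$ since $\gamma=1/(1-\alpha)$. (A priori $x\le d+\delta_D(z)+\delta_D(\zeta)\le 11\delta$ is finite, so there is no issue dividing by $x^\alpha$ when $x>0$.)

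\emph{Case $x\le(10M\delta)^\gamma$.} Here $\delta x^\alpha\le C\delta^{1+\gamma\alpha}$, and $1+\gamma\alpha=1+\alpha/(1-\alpha)=\gamma$, so $\delta x^\alpha\le C\delta^\gamma$.

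\emph{Case $x\le 4d$.} Here $\delta x^\alpha\le 4\delta d^\alpha$, and I compare $d$ with $\delta^\gamma$. If $d\ge\delta^\gamma$, then $d^{1-\alpha}\ge\delta$, so $\delta d^\alpha\le d$. If $d<\delta^\gamma$, then $\delta d^\alpha\le\delta^{1+\gamma\alpha}=\delta^\gamma$.

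In every case $\delta|p-q|^\alpha\le C(d+\delta^\gamma)$, which yields \eqref{eq:gradestimate}.

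The only delicate point is extracting the exponent $\gamma$ from the implicit inequality for $|p-q|$. The key arithmetic identity $1+\alpha\gamma=\gamma$ makes both cases close exactly. The hypotheses $\delta_D\le5\delta$ and $|z-\zeta|\le\delta/4$ serve only to keep the constants uniform. I expect that $\alpha>1/2$ and the lower bound $\delta_D\ge\delta/2$ are not needed for this lemma; they matter later in the paper.
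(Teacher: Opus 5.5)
Your proof is correct and follows essentially the paper's route. The paper sets up the same implicit Hölder inequality, stated for $u(z)=z-p(z)$ instead of for $|p(z)-p(\zeta)|$; the two are equivalent since $p(z)-p(\zeta)=(z-\zeta)-(u(z)-u(\zeta))$. It resolves that inequality by the same two-case split yielding $\delta^{1/(1-\alpha)}$, and your extra sub-case comparing $d$ with $\delta^\gamma$ only replaces the paper's direct triangle-inequality bound $|u(z)-u(\zeta)|\le 4|z-\zeta|$ in the first case.
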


\begin{proof}
Put
\[
 u(z)=z-p(z)=-\delta_D(z)\nu(p(z)).
\]
Since $\delta_D$ is $1$-Lipschitz and $\delta_D(\zeta)\le5\delta$,
\[
\begin{aligned}
 |u(z)-u(\zeta)|
 &\le |\delta_D(z)-\delta_D(\zeta)|
 +\delta_D(\zeta)|\nu(p(z))-\nu(p(\zeta))|\\
 &\le |z-\zeta|+C\delta|p(z)-p(\zeta)|^\alpha.
\end{aligned}
\]
Since
\[
 p(z)-p(\zeta)=(z-\zeta)-(u(z)-u(\zeta)),
\]
we obtain
\[
 |u(z)-u(\zeta)|
 \le |z-\zeta|
 +C\delta\bigl(|z-\zeta|+|u(z)-u(\zeta)|\bigr)^\alpha.
\]
If
\[
 |z-\zeta|+|u(z)-u(\zeta)|\le4|z-\zeta|,
\]
then $|u(z)-u(\zeta)|\le4|z-\zeta|$. Otherwise the preceding inequality gives
\[
 \frac12\bigl(|z-\zeta|+|u(z)-u(\zeta)|\bigr)
 \le C\delta\bigl(|z-\zeta|+|u(z)-u(\zeta)|\bigr)^\alpha,
\]
and hence
\[
 |z-\zeta|+|u(z)-u(\zeta)|
 \le C\delta^{1/(1-\alpha)}=C\delta^\gamma.
\]
Thus
\[
 |u(z)-u(\zeta)|\le C\bigl(|z-\zeta|+\delta^\gamma\bigr).
\]
By \eqref{eq:normalrepresentation},
\[
 \partial h(z)(w)-\partial h(\zeta)(w)
 =\langle w,u(z)-u(\zeta)\rangle,
\]
and \eqref{eq:gradestimate} follows.
\end{proof}

Note that
\begin{equation}\label{eq:hosc}
 |h(z)-h(\zeta)|\le10\delta|z-\zeta|
\end{equation}
whenever $\delta_D(z),\delta_D(\zeta)\le5\delta$. Indeed,
\[
 |\delta_D(z)^2-\delta_D(\zeta)^2|
 \le(\delta_D(z)+\delta_D(\zeta))|\delta_D(z)-\delta_D(\zeta)|,
\]
and $\delta_D$ is $1$-Lipschitz.

For small $\delta>0$ set
\[
 U_\delta=\{z\in D:\delta<\delta_D(z)<4\delta\}.
\]
Since $\tau_\delta=\delta^\gamma<\delta/4$ for small $\delta$,
$h_\delta$ is well defined on a neighborhood of $U_\delta$.

If $u$ is locally Lipschitz and $B(z,\tau)\Subset\operatorname{dom}u$, then
\begin{equation}\label{eq:diffconv}
 \partial(u*\chi_\tau)(z)(w)
 =\int \partial u(\zeta)(w)\chi_\tau(z-\zeta)\,d\zeta.
\end{equation}
In a real coordinate direction $e$ a change of variables
gives
\[
 \frac{(u*\chi_\tau)(z+te)-(u*\chi_\tau)(z)}t
 =\int\frac{u(\zeta+te)-u(\zeta)}t\chi_\tau(z-\zeta)\,d\zeta.
\]
The quotient is bounded by the Lipschitz constant of $u$, and the
restriction of $u$ to almost every line is absolutely continuous.
Dominated convergence gives the derivative formula in the direction $e$;
combining the two real directions belonging to each complex coordinate
gives \eqref{eq:diffconv}.

\begin{proposition}\label{prop:mollification}
There is a constant $C>0$ such that, for all sufficiently small $\delta$,
every $z\in U_\delta$ and every $w\in\C^n$ satisfy
\begin{equation}\label{eq:mollifiedineq}
 \frac{|\partial h_\delta(z)(w)|^2}{h_\delta(z)}
 -L_{h_\delta}(z;w)-|Q_{h_\delta}(z;w)|
 \ge -C\delta^{\frac{2\alpha}{1-\alpha}}|w|^2.
\end{equation}
Moreover
\begin{equation}\label{eq:C0approx}
 |h_\delta(z)-h(z)|\le10\delta\tau_\delta,
 \qquad z\in U_\delta.
\end{equation}
\end{proposition}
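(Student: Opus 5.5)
The plan is to average Proposition~\ref{prop:second-difference} against the mollifier, and then to control the resulting averaged right-hand side by a variance (Lagrange-identity) argument. Fix $z\in U_\delta$ and write $\tau=\tau_\delta$. For $|\zeta-z|\le\tau$ we have $\delta_D(\zeta)\in(\delta-\tau,4\delta+\tau)\subset[\delta/2,5\delta]$ for small $\delta$. By Rademacher's theorem, almost every such $\zeta$ is a differentiability point of $h$.

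\textbf{Step 1 (the $C^0$ estimate).} Since $h_\delta(z)-h(z)=\int (h(\zeta)-h(z))\chi_\tau(z-\zeta)\,d\zeta$ and $|\zeta-z|\le\tau$, \eqref{eq:hosc} gives \eqref{eq:C0approx} at once.

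\textbf{Step 2 (second-order bound by averaging).} Apply \eqref{eq:second-difference} at each such $\zeta$, then integrate against $\chi_\tau(z-\zeta)$. A change of variables gives
\[
 h_\delta(z+te^{i\theta}w)+h_\delta(z-te^{i\theta}w)-2h_\delta(z)\le 2t^2\int\frac{|\partial h(\zeta)(w)|^2}{h(\zeta)}\chi_\tau(z-\zeta)\,d\zeta .
\]
Divide by $t^2$ and let $t\to0$; since $h_\delta$ is smooth, \eqref{eq:complex-line-second} applies. Choosing $\theta$ with $e^{2i\theta}Q_{h_\delta}(z;w)=|Q_{h_\delta}(z;w)|$ yields
\[
 L_{h_\delta}(z;w)+|Q_{h_\delta}(z;w)|\le \int\frac{|\partial h(\zeta)(w)|^2}{h(\zeta)}\chi_\tau(z-\zeta)\,d\zeta=:I .
\]
So it suffices to show that $I-|\partial h_\delta(z)(w)|^2/h_\delta(z)\le C\delta^{2\alpha/(1-\alpha)}|w|^2$.

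\textbf{Step 3 (the main obstacle: a sharp comparison).} Naively perturbing numerator and denominator via Lemma~\ref{lem:gradosc} and \eqref{eq:C0approx} loses too much. It yields errors of size only $\delta^{\gamma-1}=\delta^{\alpha/(1-\alpha)}$, which is half the required exponent. Instead I will exploit the structure \eqref{eq:normalrepresentation}. Write $\partial h(\zeta)(w)=\sqrt{h(\zeta)}\,e(\zeta)$ with $e(\zeta)=-\langle w,\nu(p(\zeta))\rangle$. Put $d\mu=\chi_\tau(z-\zeta)d\zeta$. By \eqref{eq:diffconv}, $\partial h_\delta(z)(w)=\int\sqrt h\,e\,d\mu$ and $h_\delta(z)=\int h\,d\mu$. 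The Lagrange identity then gives
\[
 I-\frac{|\partial h_\delta(z)(w)|^2}{h_\delta(z)}=\frac{1}{2h_\delta(z)}\iint\bigl|\delta_D(\zeta)e(\zeta')-\delta_D(\zeta')e(\zeta)\bigr|^2d\mu(\zeta)\,d\mu(\zeta').
\]
Split the integrand as $(\delta_D(\zeta)-\delta_D(\zeta'))e(\zeta')+\delta_D(\zeta')(e(\zeta')-e(\zeta))$. The first term is at most $2\tau|w|$, since $\delta_D$ is $1$-Lipschitz. For the second, the proof of Lemma~\ref{lem:gradosc} gives $|p(\zeta)-p(\zeta')|\le|\zeta-\zeta'|+|u(\zeta)-u(\zeta')|\le C\delta^\gamma$. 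Hence $|e(\zeta)-e(\zeta')|\le M(C\delta^\gamma)^\alpha|w|$, and the second term is $O(\delta^{1+\gamma\alpha}|w|)$. Since $h_\delta(z)\ge\delta^2/4$ (for instance by \eqref{eq:C0approx}), the whole expression is bounded by
\[
 C\bigl(\delta^{2\gamma-2}+\delta^{2\gamma\alpha}\bigr)|w|^2=C\delta^{\frac{2\alpha}{1-\alpha}}|w|^2,
\]
because $2\gamma-2=2\gamma\alpha=2\alpha/(1-\alpha)$. Combined with Step 2, this gives \eqref{eq:mollifiedineq}.
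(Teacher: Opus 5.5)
Your proposal is correct and is essentially the paper's proof. You average Proposition~\ref{prop:second-difference} against the mollifier and then control the gap by an exact variance identity. Your symmetric Lagrange form $\frac{1}{2h_\delta(z)}\iint|\delta_D(\zeta)e(\zeta')-\delta_D(\zeta')e(\zeta)|^2\,d\mu(\zeta)\,d\mu(\zeta')$ is the same quantity as the paper's $\int h(\zeta)\,\bigl|\partial h(\zeta)(w)/h(\zeta)-\partial h_\delta(z)(w)/h_\delta(z)\bigr|^2\chi_{\tau_\delta}(z-\zeta)\,d\zeta$.

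The only difference is cosmetic. You bound the oscillation through H\"older continuity of $\nu$ at footpoints that are $O(\delta^\gamma)$ apart. The paper instead bounds the oscillation of $\partial h(w)/h$ by $C\delta^{\gamma-2}|w|$ using Lemma~\ref{lem:gradosc} together with \eqref{eq:hosc}. Both routes give the exponent $2\gamma-2=2\alpha/(1-\alpha)$.
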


\begin{proof}
Fix $z\in U_\delta$, $w\in\C^n$, and $\theta\in\mathbb R$. Since
$\delta_D(\zeta)\ge\delta-\tau_\delta$ for every
$\zeta\in B(z,\tau_\delta)$, there is $t_0>0$, depending on
$z,\delta,w$ but not on $\zeta$, such that
$\zeta\pm te^{i\theta}w\in D$ for every
$\zeta\in B(z,\tau_\delta)$ whenever $|t|<t_0$. For such $t$,
Proposition~\ref{prop:second-difference} applies at almost every
$\zeta\in B(z,\tau_\delta)$. Averaging with
$\chi_{\tau_\delta}(z-\zeta)$ gives
\begin{align*}
 &h_\delta(z+t e^{i\theta}w)+h_\delta(z-t e^{i\theta}w)-2h_\delta(z)\\
 &\qquad\le
 2t^2\left(\frac{|\partial h(w)|^2}{h}*\chi_{\tau_\delta}\right)(z).
\end{align*}
Divide by $t^2$, let $t\to0$, and use
\eqref{eq:complex-line-second}. We obtain
\[
 L_{h_\delta}(z;w)+\operatorname{Re}\bigl(e^{2i\theta}Q_{h_\delta}(z;w)\bigr)
 \le\left(\frac{|\partial h(w)|^2}{h}*\chi_{\tau_\delta}\right)(z).
\]
Taking the maximum over $\theta$ yields
\begin{equation}\label{eq:convineq}
 L_{h_\delta}(z;w)+|Q_{h_\delta}(z;w)|
 \le\left(\frac{|\partial h(w)|^2}{h}*\chi_{\tau_\delta}\right)(z).
\end{equation}

It remains to compare the right-hand side with
$|\partial h_\delta(z)(w)|^2/h_\delta(z)$. Expanding the square gives the
exact identity
\begin{align}\label{eq:variance-identity}
 &\int
 \frac{|\partial h(\zeta)(w)|^2}{h(\zeta)}
 \chi_{\tau_\delta}(z-\zeta)\,d\zeta
 -\frac{|\partial h_\delta(z)(w)|^2}{h_\delta(z)}\notag\\
 &\qquad=
 \int h(\zeta)
 \left|
 \frac{\partial h(\zeta)(w)}{h(\zeta)}
 -\frac{\partial h_\delta(z)(w)}{h_\delta(z)}
 \right|^2
 \chi_{\tau_\delta}(z-\zeta)\,d\zeta.
\end{align}

If $\zeta\in B(z,\tau_\delta)$, then for small $\delta$
\[
 \frac{3\delta}{4}<\delta_D(\zeta)<\frac{17\delta}{4},
\]
so
\begin{equation}\label{eq:hexplicitbounds}
 \frac{9}{16}\delta^2<h(\zeta)<\frac{289}{16}\delta^2
\end{equation}
and, at differentiability points,
\[
 |\partial h(\zeta)(w)|\le\frac{17}{4}\delta|w|.
\]
For $\zeta,\eta\in B(z,\tau_\delta)$ we have
$|\zeta-\eta|\le2\tau_\delta$. For sufficiently small $\delta$,
$2\tau_\delta\le\delta/4$, so Lemma~\ref{lem:gradosc} gives
\[
 |\partial h(\zeta)(w)-\partial h(\eta)(w)|
 \le C\delta^\gamma|w|
\]
for almost every pair $\zeta,\eta$. Also
\eqref{eq:hosc} gives
\[
 |h(\zeta)-h(\eta)|\le C\delta\tau_\delta.
\]
Consequently
\begin{align*}
 \left|
 \frac{\partial h(\zeta)(w)}{h(\zeta)}
 -\frac{\partial h(\eta)(w)}{h(\eta)}
 \right|
 &\le
 \frac{|\partial h(\zeta)(w)-\partial h(\eta)(w)|}{h(\zeta)}\\
 &\quad+
 \frac{|\partial h(\eta)(w)|\,|h(\zeta)-h(\eta)|}
 {h(\zeta)h(\eta)}\\
 &\le C\bigl(\delta^{\gamma-2}+\tau_\delta\delta^{-2}\bigr)|w|\\
 &\le C\delta^{\gamma-2}|w|.
\end{align*}
Now
\[
 \frac{\partial h_\delta(z)(w)}{h_\delta(z)}
 =\frac{1}{h_\delta(z)}
 \int h(\eta)\frac{\partial h(\eta)(w)}{h(\eta)}
 \chi_{\tau_\delta}(z-\eta)\,d\eta,
\]
so it is the average of $\partial h(\eta)(w)/h(\eta)$ with respect to the
probability measure
\[
 \frac{h(\eta)\chi_{\tau_\delta}(z-\eta)}{h_\delta(z)}\,d\eta.
\]
Therefore the same oscillation bound gives
\[
 \left|
 \frac{\partial h(\zeta)(w)}{h(\zeta)}
 -\frac{\partial h_\delta(z)(w)}{h_\delta(z)}
 \right|
 \le C\delta^{\gamma-2}|w|
\]
for almost every $\zeta\in B(z,\tau_\delta)$. Using
\eqref{eq:hexplicitbounds} in \eqref{eq:variance-identity}, we obtain
\[
 \left(\frac{|\partial h(w)|^2}{h}*\chi_{\tau_\delta}\right)(z)
 -\frac{|\partial h_\delta(z)(w)|^2}{h_\delta(z)}
 \le C\delta^{2\gamma-2}|w|^2.
\]
Combining this with \eqref{eq:convineq}, and using
$2\gamma-2=\frac{2\alpha}{1-\alpha}$, proves
\eqref{eq:mollifiedineq}.

Finally, if $\zeta\in B(z,\tau_\delta)$, then \eqref{eq:hosc} gives
\[
 |h(\zeta)-h(z)|\le10\delta|\zeta-z|\le10\delta\tau_\delta.
\]
Averaging proves \eqref{eq:C0approx}.
\end{proof}

\begin{lemma}\label{lem:strictification}
Let $U\subset B(0,R)$, let $g\in C^2(U)$ be positive, and suppose
\[
 \widetilde g=g-\varepsilon(1+|z|^2)>0.
\]
Then, for every $w\in\C^n$,
\begin{align}\label{eq:strictification}
 &\frac{|\partial\widetilde g(w)|^2}{\widetilde g}
 -L_{\widetilde g}(w)-|Q_{\widetilde g}(w)|\\
 &\qquad\ge
 \frac{|\partial g(w)|^2}{g}-L_g(w)-|Q_g(w)|
 +\frac{\varepsilon}{1+R^2}|w|^2.\nonumber
\end{align}
\end{lemma}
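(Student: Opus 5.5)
We compute directly. Put $\phi=1+|z|^2$, so $\widetilde g=g-\varepsilon\phi$. The derivatives of $\phi$ are
\[
 \partial\phi(w)=\langle w,z\rangle,\qquad L_\phi(w)=|w|^2,\qquad Q_\phi(w)=0.
\]
Hence
\[
 L_{\widetilde g}=L_g-\varepsilon|w|^2,\qquad Q_{\widetilde g}=Q_g,\qquad \partial\widetilde g(w)=\partial g(w)-\varepsilon\langle w,z\rangle .
\]
The term $L+|Q|$ therefore drops by exactly $\varepsilon|w|^2$. The inequality \eqref{eq:strictification} is then reduced to the gradient-quotient estimate
\begin{equation*}
 \frac{|\partial g(w)-\varepsilon\langle w,z\rangle|^2}{g-\varepsilon\phi}
 \ge \frac{|\partial g(w)|^2}{g}-\varepsilon|w|^2+\frac{\varepsilon}{1+R^2}|w|^2 .
\end{equation*}
In fact we would prove the slightly stronger
\[
 \frac{|a-\varepsilon b|^2}{g-\varepsilon\phi}\ge\frac{|a|^2}{g}-\varepsilon\frac{|b|^2}{\phi},
\]
where $a=\partial g(w)$ and $b=\langle w,z\rangle$.

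The key tool is the joint convexity of $(a,s)\mapsto |a|^2/s$ on $\C\times(0,\infty)$, equivalently the Cauchy--Schwarz/Sedrakyan inequality $|x+y|^2/(s+t)\le |x|^2/s+|y|^2/t$ for $s,t>0$. Apply it with $x=a-\varepsilon b$, $s=g-\varepsilon\phi$ (positive by hypothesis), $y=\varepsilon b$, $t=\varepsilon\phi$. This gives
\[
 \frac{|a|^2}{g}\le\frac{|a-\varepsilon b|^2}{g-\varepsilon\phi}+\frac{\varepsilon^2|b|^2}{\varepsilon\phi},
\]
which is exactly the claimed estimate. If $\varepsilon=0$ there is nothing to prove, and we may take $\varepsilon>0$ as intended.

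Next we bound $|b|^2/\phi$. By Cauchy--Schwarz, $|b|^2\le|z|^2|w|^2$. Since $U\subset B(0,R)$,
\[
 \frac{|z|^2}{1+|z|^2}=1-\frac1{1+|z|^2}\le 1-\frac1{1+R^2}.
\]
Therefore $\varepsilon|b|^2/\phi\le\varepsilon|w|^2-\frac{\varepsilon}{1+R^2}|w|^2$. Combining this with the reduction above yields \eqref{eq:strictification}.

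The only nonroutine step is the convexity inequality. If needed, it can be verified directly: multiplying out, it is equivalent to $|tx-sy|^2\ge0$. Everything else is bookkeeping of the first and second derivatives of $|z|^2$.
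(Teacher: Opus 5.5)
Your proof is correct and is essentially the paper's argument. The paper's ``direct calculation'' is exactly the remainder identity $\frac{|x|^2}{s}+\frac{|y|^2}{t}-\frac{|x+y|^2}{s+t}=\frac{|tx-sy|^2}{st(s+t)}$ for your choice $x=a-\varepsilon b$, $y=\varepsilon b$, $s=g-\varepsilon\phi$, $t=\varepsilon\phi$, giving $\varepsilon|\phi a-gb|^2/(g\widetilde g\phi)$, and it is followed by the same bound $|w|^2-|\langle w,z\rangle|^2/(1+|z|^2)\ge|w|^2/(1+R^2)$; as you note, both arguments need $\varepsilon\ge0$, which holds in the application $\varepsilon=c\delta^2$.
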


\begin{proof}
Note that
\[
 \partial(1+|z|^2)(w)=\langle w,z\rangle,
 \qquad
 L_{1+|z|^2}(w)=|w|^2,
 \qquad
 Q_{1+|z|^2}(w)=0.
\]
A direct calculation gives
\[
\begin{aligned}
 &\frac{|\partial g(w)-\varepsilon\langle w,z\rangle|^2}
 {g-\varepsilon(1+|z|^2)}
 -\frac{|\partial g(w)|^2}{g}
 +\varepsilon\frac{|\langle w,z\rangle|^2}{1+|z|^2}\\
 &\qquad=
 \frac{\varepsilon\left|(1+|z|^2)\partial g(w)-g\langle w,z\rangle\right|^2}
 {g\bigl(g-\varepsilon(1+|z|^2)\bigr)(1+|z|^2)}\ge0.
\end{aligned}
\]
Therefore
\begin{align*}
 &\left(\frac{|\partial\widetilde g(w)|^2}{\widetilde g}
 -L_{\widetilde g}(w)-|Q_{\widetilde g}(w)|\right)\\
 &\quad-\left(\frac{|\partial g(w)|^2}{g}
 -L_g(w)-|Q_g(w)|\right)\\
 &\qquad\ge\varepsilon\left(
 |w|^2-\frac{|\langle w,z\rangle|^2}{1+|z|^2}\right).
\end{align*}
Since $|\langle w,z\rangle|\le|w||z|$,
\[
 |w|^2-\frac{|\langle w,z\rangle|^2}{1+|z|^2}
 \ge\frac{|w|^2}{1+|z|^2}
 \ge\frac{|w|^2}{1+R^2}.
\]
This proves \eqref{eq:strictification}.
\end{proof}

For $z\in U_\delta$ and $\zeta\in B(z,\tau_\delta)$ we have
$\delta_D(\zeta)>\delta-\tau_\delta$, and therefore
\[
 h_\delta(z)\ge(\delta-\tau_\delta)^2.
\]
Thus, for all sufficiently small $\delta$, $\widetilde h_\delta>0$ on
$U_\delta$.

\begin{proposition}\label{prop:strict}
There is $c_*>0$ such that, for all sufficiently small $\delta>0$,
every $z\in U_\delta$ and every $w\in\C^n$ satisfy
\begin{equation}\label{eq:strictineq}
 \frac{|\partial\widetilde h_\delta(z)(w)|^2}{\widetilde h_\delta(z)}
 -L_{\widetilde h_\delta}(z;w)-|Q_{\widetilde h_\delta}(z;w)|
 \ge c_*\delta^2|w|^2.
\end{equation}
\end{proposition}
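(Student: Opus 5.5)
Apply Lemma~\ref{lem:strictification} with $U=U_\delta$, $g=h_\delta$ and $\varepsilon=c\delta^2$, then absorb the mollification error from Proposition~\ref{prop:mollification} into the strictification gain. First check the hypotheses. $U_\delta\subset D\subset B(0,R)$. The function $h_\delta$ is $C^\infty$ on a neighborhood of $U_\delta$ because $\tau_\delta<\delta/4$. It is positive there, since $h_\delta(z)\ge(\delta-\tau_\delta)^2$. Finally, $\widetilde h_\delta=h_\delta-c\delta^2(1+|z|^2)>0$ on $U_\delta$, as noted just before the statement; explicitly, $c\delta^2(1+|z|^2)\le c(1+R^2)\delta^2<\delta^2/20$, while $h_\delta\ge(\delta-\tau_\delta)^2$.

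Inequality \eqref{eq:strictification} then gives, for $z\in U_\delta$ and $w\in\C^n$,
\[
 \frac{|\partial\widetilde h_\delta(z)(w)|^2}{\widetilde h_\delta(z)}
 -L_{\widetilde h_\delta}(z;w)-|Q_{\widetilde h_\delta}(z;w)|
 \ge
 \frac{|\partial h_\delta(z)(w)|^2}{h_\delta(z)}
 -L_{h_\delta}(z;w)-|Q_{h_\delta}(z;w)|
 +\frac{c\delta^2}{1+R^2}|w|^2 .
\]
By \eqref{eq:mollifiedineq}, the first three terms on the right are bounded below by $-C\delta^{2\alpha/(1-\alpha)}|w|^2$.

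The only step needing care is the comparison of exponents. Since $\alpha>1/2$, we have $2\alpha/(1-\alpha)>2$, so $C\delta^{2\alpha/(1-\alpha)}\le \frac{c}{2(1+R^2)}\delta^2$ for all sufficiently small $\delta$. This is where the hypothesis $\alpha>1/2$, through $\gamma>2$, is really used: the variance error from mollifying at scale $\delta^\gamma$ must be $o(\delta^2)$, while the strictification term is exactly of order $\delta^2$.

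Hence \eqref{eq:strictineq} holds with $c_*=\frac{c}{2(1+R^2)}$. The constant $c_*$ is independent of $\delta$, $z$ and $w$, because $C$ in Proposition~\ref{prop:mollification} and $c$, $R$ are fixed.
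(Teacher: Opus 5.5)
Your proposal is correct and follows the paper's proof exactly. You apply Lemma~\ref{lem:strictification} with $g=h_\delta$ and $\varepsilon=c\delta^2$, and you absorb the $O(\delta^{2\alpha/(1-\alpha)})$ error from Proposition~\ref{prop:mollification} using $2\alpha/(1-\alpha)>2$, which gives the same constant $c_*=c/(2(1+R^2))$; your explicit check of the lemma's hypotheses on $U_\delta$ is a welcome addition.
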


\begin{proof}
Proposition~\ref{prop:mollification} and
Lemma~\ref{lem:strictification}, with $\varepsilon=c\delta^2$, give
\[
 \frac{|\partial\widetilde h_\delta(w)|^2}{\widetilde h_\delta}
 -L_{\widetilde h_\delta}(w)-|Q_{\widetilde h_\delta}(w)|
 \ge
 \left(\frac{c}{1+R^2}\delta^2
 -C\delta^{\frac{2\alpha}{1-\alpha}}\right)|w|^2.
\]
Since $2\alpha/(1-\alpha)>2$, for all sufficiently small $\delta$ we have
\[
 C\delta^{\frac{2\alpha}{1-\alpha}}
 \le \frac{c}{2(1+R^2)}\delta^2.
\]
Thus \eqref{eq:strictineq} holds with
$c_*=c/(2(1+R^2))$.
\end{proof}

\begin{proof}[Completion of the proof of Theorem~\ref{thm:main}]
We now verify the properties of the domains defined in \eqref{eq:Vdelta}.
Since $\delta_D$ is $1$-Lipschitz, whenever $\delta_D(z)>\tau_\delta$,
\begin{equation}\label{eq:basicconvbounds}
 (\delta_D(z)-\tau_\delta)^2\le h_\delta(z)
 \le(\delta_D(z)+\tau_\delta)^2.
\end{equation}
Consequently, after decreasing $\delta_0>0$ if necessary, for every
$0<\delta<\delta_0$ we have
\[
 \delta_D(z)\le\frac{3\delta}{2}
 \quad\Longrightarrow\quad
 \widetilde h_\delta(z)<\frac92\delta^2,
\]
whereas
\[
 \delta_D(z)\ge\frac{5\delta}{2}
 \quad\Longrightarrow\quad
 \widetilde h_\delta(z)>\frac92\delta^2.
\]
Indeed, both implications follow directly from \eqref{eq:basicconvbounds},
$\tau_\delta/\delta=\delta^{\gamma-1}\to0$, and the fixed choice of $c$.
In particular,
\begin{equation}\label{eq:corecontainment}
 \left\{\delta_D>\frac{5\delta}{2}\right\}
 \subset V_\delta
 \subset
 \left\{\delta_D>\frac{3\delta}{2}\right\}.
\end{equation}

Choose $\delta_0$ still smaller so that Proposition~\ref{prop:strict}
applies for $0<\delta<\delta_0$ and
$\delta_D(z_*)>5\delta_0/2$. Then $z_*\in V_\delta$ for every
$0<\delta<\delta_0$.

Let $p\in\partial G_\delta$. By \eqref{eq:corecontainment},
\[
 \frac{3\delta}{2}<\delta_D(p)<\frac{5\delta}{2},
 \qquad
 \widetilde h_\delta(p)=\frac92\delta^2.
\]
In particular $p\in U_\delta$. We claim that
$\partial\widetilde h_\delta(p)\ne0$. Otherwise
Proposition~\ref{prop:strict} gives
\[
 -L_{\widetilde h_\delta}(p;w)-|Q_{\widetilde h_\delta}(p;w)|
 \ge c_*\delta^2|w|^2
\]
for every $w\in\C^n$. By \eqref{eq:complex-line-second}, the real Hessian
of $\widetilde h_\delta$ at $p$ is then negative definite. Thus $p$ is a
strict local maximum of $\widetilde h_\delta$, contradicting the fact that
$p\in\partial V_\delta$ and $V_\delta$ is locally the superlevel set
$\{\widetilde h_\delta>9\delta^2/2\}$. Hence $\partial G_\delta$ is
$C^\infty$.

Near $p\in\partial G_\delta$ a defining function is
\[
 r=\frac92\delta^2-\widetilde h_\delta.
\]
Let $w\ne0$ be complex tangent at $p$. Then
$\partial\widetilde h_\delta(p)(w)=0$, and
Proposition~\ref{prop:strict} yields
\[
 -L_{\widetilde h_\delta}(p;w)-|Q_{\widetilde h_\delta}(p;w)|>0.
\]
Since $L_r=-L_{\widetilde h_\delta}$ and
$Q_r=-Q_{\widetilde h_\delta}$,
\[
 L_r(p;w)>|Q_r(p;w)|.
\]
Thus $G_\delta$ is strongly linearly convex. By
\eqref{eq:corecontainment}, $G_\delta\Subset D$.
Now set $\delta_j=2^{-j}\delta_0$, $D_j=G_{\delta_j}$.
From \eqref{eq:corecontainment},
\[
 \overline{D_j}\subset
 \left\{\delta_D\ge\frac{3\delta_j}{2}\right\}
 \subset
 \left\{\delta_D>\frac{5\delta_{j+1}}2\right\}
 \subset V_{\delta_{j+1}}.
\]
Since $\overline{D_j}$ is connected and contains $z_*$, it lies in the
component $G_{\delta_{j+1}}=D_{j+1}$. Hence
$\overline{D_j}\subset D_{j+1}$.

Finally, let $z\in D$ and join $z_*$ to $z$ by a continuous path in $D$.
The image of the path is compact, hence has positive distance $m$ from
$\partial D$. For all sufficiently large $j$, $5\delta_j/2<m$, so the
whole path lies in
$\{\delta_D>5\delta_j/2\}\subset V_{\delta_j}$. Therefore $z$ belongs to
the same component of $V_{\delta_j}$ as $z_*$, namely $D_j$. Thus $\bigcup_{j=1}^\infty D_j=D$.
This completes the proof.
\end{proof}

\end{document}